\documentclass{amsart}

\usepackage{amsmath, amsthm}
\usepackage{amssymb, latexsym}
\usepackage{amsfonts}

\newtheorem{theorem}[equation]{Theorem}

\newtheorem{proposition}[equation]{Proposition}

\newtheorem{lemma}[equation]{Lemma}

\newtheorem{Example}[equation]{Examples}

\theoremstyle{definition}

\newcommand{\thmref}[1]{\rm Theorem~\ref{#1}}

\newcommand{\propref}[1]{\rm Proposition~\ref{#1}}

\newcommand{\beql}[1]{\begin{equation}\label{#1}}
\newcommand{\eeq} {\end{equation}}

    \font\Aaa=msam10

\font\Bbb=msbm10

\newcommand\R{\hbox{\Bbb R}}
\newcommand\Z{\hbox{\Bbb Z}}

\newcommand\F{\hbox{\Bbb F}}

\font\Aaa=msam10

\def\qed{\hbox{~~\Aaa\char'003}}

\font\Bbb=msbm10

\def\R{\hbox{\Bbb R}}

\def\Z{\hbox{\Bbb Z}}

\def\F{\hbox{\Bbb F}}

 \def\CC{\hbox{\Bbb C}}

\numberwithin{equation}{section}

\newcommand\G{\Gamma }
\newcommand\D{{ \Delta }}
\newcommand\z{{ \zeta }}

        \def\<{{\langle}}
        \def\>{{\rangle}}

        \font\Aaa=msam10

\font\Aaa=msam10

\font\Bbb=msbm10

\def\R{\hbox{\Bbb R}}

\def\Z{\hbox{\Bbb Z}}

\def\F{\hbox{\Bbb F}}

\def\a{\alpha}

\def\div{ \kern-.5pt\hbox{\big |} }
\def\ndiv{ {\not\kern-.5pt\hbox{\big |}\,} }
\def\ndivv{ {\not\kern+1.5pt\hbox{$\mid$}\,} }

\def\B{^2\kern-.8pt B}
\def\G{^2\kern-.8pt G}
\def\EH{^2\kern-.8pt\hat  E}
\def\E{^2\kern-.8pt E}
\def\D{^3\kern-1pt D}
\def\FF{^2\kern-.8pt F}

\newdimen\refcodesize
\newbox\seriesbox
\def\proof{\noindent {\bf Proof.~}}

\DeclareRobustCommand{\SkipTocEntry}[4]{}
\begin{document}

\title[MUBs from bent functions]
{MUBs from bent functions
 }
 
    \author{William M. Kantor }
   \address{Brook House, Brookline, MA 02445}
    \email{kantor@uoregon.edu}

\begin{abstract}

This note  contains  a  simple construction of complete sets of MUBs, using bent functions
to write the new  basis vectors as explicit linear combinations of the standard basis.   

% \vspace{-18pt}  
 \end{abstract}
\maketitle 

%\vspace{-2pt}

\section{Introduction}
 \label{Introduction} 
This note  contains  a short,  elementary construction of  complete sets of mutually unbiased bases (MUBs)
 in $\CC^N,$ 
where $ N=p^n$ for
a prime   $p$.  This   sequel to \cite{Ka} uses an idea presented there 
without  the connection to bent functions  being noticed.   
 Preferring brevity~we refer to 
 \cite{Wo,Ka} for background and some references concerning complete sets of  MUBs.
 
 Our point of view   is different from the  groups/geometries approaches to MUBs
 \cite{WoF,Wo,CCKS,Ka}.   However, this has not led to the construction of any new complete sets of MUBs.
  
%\vspace{-2pt}

\section{Odd characteristic} 
\rm

\label{MUBs from bent functions}
Using row vectors, let $ V=\Z_p^n$ with $p$ an odd  prime,   equipped with the usual~dot product;
and equip $\CC^N,$ $N=  p^n$, with the usual hermitian 
inner product
and orthonormal basis   $\{e_a\mid a\in V\}$.
Orthonormal bases 
$\mathfrak{M}$ and $\mathfrak{M}'$  
are \emph{mutually unbiased bases} (MUBs) if 
$|(u,v)|=1/\sqrt{N}$  for all  $u\in \mathfrak{M},v\in   \mathfrak{M}'$.
A {\em complete set of} MUBs in $\CC^N$ is a set of $N+1$ pairwise mutually unbiased orthonormal bases.

Let $\beth$ be a  {\em mubent set of functions}  $V\to \Z_p$:
  a set of $|V|$  functions $V\to  \Z_p$ such that the difference of any two is bent.   
  A defining property of a {\em bent function}
 ${B\!:\!V\to  \Z_p}$  \cite[p. 75]{Me}  
 is   that, if $0\ne u\in V$, then $v\mapsto B(v+u)-B(v)$ takes every value in $\Z_p$
 exactly $|V|/p$ times.

If  $a\in V, $  $B\in \beth$, let 
$e_{a,B}:=  
\displaystyle
\sum_{v\in V}\z^{a\cdot v+B(v)}e_{v}$, where $\z\in \CC$ is a primitive $p$th root of 1.
This   definition  comes from  coding theory, finite symplectic geometry 
and finite affine planes  \cite[(5.3)]{CCKS}
 (compare \cite[(14)]{WoF}).

Let   ${\mathfrak{M}}_\infty:=\{e_a \mid a\in V\}$ and 
${\mathfrak{M}}_B:=\{\frac{1}{\sqrt N} e_{a,B}\mid  a \in V\}$ for $B\in \beth$.

\begin{theorem}
\label{main bent} 
If $\,\beth$ is a    mubent set of functions  $V\to \Z_p$   then
$\{{\mathfrak{M}}_\infty \}\cup\{{\mathfrak{M}}_B\mid B\in \beth\}$ is a complete set of MUBs in $ \CC^N.$
 \end{theorem}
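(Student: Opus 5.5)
The plan is to compute every hermitian inner product directly from the definition of $e_{a,B}$ and to reduce each one to a character sum over $V$. Three kinds of pairs have to be checked: vectors within a single $\mathfrak{M}_B$ (orthonormality), vectors of $\mathfrak{M}_\infty$ against vectors of $\mathfrak{M}_B$, and vectors of $\mathfrak{M}_B$ against vectors of $\mathfrak{M}_{B'}$ with $B\ne B'$. Once all three are handled we will have $1+|\beth|=N+1$ pairwise mutually unbiased orthonormal bases, since $\mathfrak{M}_\infty$ is orthonormal by hypothesis. The fact used throughout is that $\sum_{v\in V}\z^{c\cdot v}$ equals $N$ if $c=0$ and $0$ otherwise.

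The first two cases are routine. For $a,b\in V$ we have
\[
(e_{a,B},e_{b,B})=\sum_{v\in V}\z^{(a-b)\cdot v},
\]
because the terms $\z^{B(v)}$ and $\overline{\z^{B(v)}}$ cancel. This sum equals $N\delta_{a,b}$, so $\mathfrak{M}_B$ consists of $N$ orthonormal vectors and is therefore a basis. Also $(e_b,\frac{1}{\sqrt N}e_{a,B})$ is $\frac1{\sqrt N}$ times a root of unity, so it has absolute value $1/\sqrt N$.

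The main step is the case $B\ne B'$. Put $D=B-B'$, which is bent by hypothesis. Then
\[
(e_{a,B},e_{b,B'})=\sum_{v\in V}\z^{c\cdot v+D(v)}, \qquad c=a-b,
\]
and we need this sum $S$ to satisfy $|S|^2=N$. Expanding,
\[
|S|^2=\sum_{u,v\in V}\z^{c\cdot(v+u)+D(v+u)-c\cdot v-D(v)}=\sum_{u\in V}\z^{c\cdot u}\sum_{v\in V}\z^{D(v+u)-D(v)},
\]
where the second form comes from substituting $w=v+u$. For $u=0$ the inner sum is $N$. For $u\ne0$ the defining property of a bent function says that $D(v+u)-D(v)$ takes each value of $\Z_p$ exactly $N/p$ times, so the inner sum is $(N/p)\sum_{t\in\Z_p}\z^t=0$. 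Hence $|S|^2=N$. Dividing by the normalisation $N$ then gives $|(u,v)|=1/\sqrt N$ for all $u\in\mathfrak{M}_B$ and $v\in\mathfrak{M}_{B'}$.

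The only real obstacle is this last computation, namely reindexing the double sum so that the bent difference property applies; the remaining steps are bookkeeping. One should also note that, since every difference $B-B'$ is bent and hence nonzero, the $N$ functions in $\beth$ are distinct. So the bases $\mathfrak{M}_B$ are distinct and the count of $N+1$ bases is correct.
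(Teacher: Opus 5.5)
Your proposal is correct and follows the paper's proof essentially step for step. Like the paper, you reduce each inner product to $\sum_{v}\zeta^{d\cdot v+\Delta(v)}$, expand the squared modulus as a double sum, reindex by $u=v'-v$, and kill the $u\ne 0$ terms with the bent property. Your explicit checks of the trivial $\mathfrak{M}_\infty$ versus $\mathfrak{M}_B$ case and of the distinctness of the $N+1$ bases are points the paper leaves implicit.
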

  
  \proof 
  For $a,a'\in V$ and $B,B'\in \beth$ let
$$\alpha:= (e_{a,B}, e_{a',B'})=\displaystyle
\sum_{v\in V}
\z^{a \cdot v+  B(v)-a' \cdot v - B'(v)} =
\sum_{v\in V}
\z^{d \cdot v +  \Delta(v)}
$$
with  {$d:=a-a'$  and $ \Delta:=\! B-B'$}.  Here
  ${(e_{a,B}, e_{a',B})=(|V|/p)\sum_{j=0}^{p-1}\z^j=0}$~when ${a\ne a'}$,
and  $(e_{a,B}, e_{a,B})\! =\! |V| =\!  N$.
  
%\medskip\medskip
    Let  $\Delta\ne 0$, so $\Delta$ is bent by hypothesis.  Use $u=v'-v$ in  the following   calculation: 
\vspace{2pt}

$\begin{array}{llllllll}
\vspace{2pt}  
|(e_{a,B}, e_{a',B'})|^2
=\,  \alpha\bar\alpha \,  
\hspace{-6pt}&= & \hspace{-10pt}\vspace{2pt}\displaystyle
 \sum_{v,v'\in V}
\z^{d \cdot  v'  +\Delta(v')-d \cdot  v - \Delta(v) } 
\\  
\vspace{-2pt}  \hspace{-6pt}&= & \hspace{-6pt}\displaystyle
 \sum_{u \in V}\z^{d\cdot u  }
 \sum_{v \in V}
\z^{ \Delta(v+u)-\Delta(v)}  \vspace{-2pt}
\\  
\vspace{2pt}\hspace{-6pt}&= & \hspace{-6pt}\displaystyle  
\sum_{ u =0}\z^{d\cdot u} { \sum_{v \in V}\z^0}+ 
 \displaystyle
 \! \!\sum_{0\ne u \in V}\z^{d\cdot u  }
  (|V|/p)   \sum_{j=0}^{p-1}\z^j
\\  
\hspace{-6pt}&= & \hspace{-6pt}   |V| +0\,= \,N     .
\end{array}
$
\smallskip

 \noindent 
Then
 $ |(\frac{1}{\sqrt N}e_{a,B}, \frac{1}{\sqrt N}e_{a',B'})|^2 =\frac{1}{ {N^2}}N=\frac{1}{ {N}},$
 as required for MUBs. \qed

%\medskip
 \Example \rm    \label{odd}
The  simplest  examples  use $V=\F_{p^n},$   the trace map 
$Tr\!:\!V\to  \Z_p$, and $\beth :=\{ x\mapsto Tr(ax^2)\mid a\in V\}$.

 \smallskip\smallskip
The preceding calculation  slightly generalizes one in \cite[p.~032204-5]{Ka}  that used
    quadratic  functions   $v  Mv^T/2$ for symmetric $n\times n$ matrices $M$.
 If $\beth$ consists of such quadratic   functions  then the mubent set condition states that the set of 
 these matrices $M$ is a spread set.
 (A {\em spread set}  of $n\times n$ matrices over $\Z_p$ 
  is a set of $p^n$ such matrices     for which   all
  differences between pairs of these matrices  are nonsingular,  and such sets  produce affine planes
  \cite[p.~220]{De}.)
     Many mubent sets 
     are closed under addition, but some are not \cite[Example 3.5(e)]{Ka}.%
     
    The preceding construction fails when $p=2$ because there is then no suitable set $\beth$  (the next section avoids this problem by modifying the meaning of ``mubent'').
   A set of bent functions $\Z_2^n\to \Z_2$   such that the difference of any two is bent has size at most $2^{n-1}$.  (Such a set produces a set of   MUBs in $\R^{2^n}$ 
   (using $\z=-1$ in~the above construction),  which 
      has size at most $2^{n-1}+1$  {\cite[(3.9)]{CCKS}.)%  

%\vspace{-2pt}
\section{Characteristic $2$} 
\rm
    
\label{main char 2} 

For 2-power dimensions our construction
of complete sets of MUBs is similar to the preceding one, using both $\Z_2$ and $\Z_4$   instead of just $\Z_p$
(as in \cite{WoF}).

We use $ V=\Z_2^n$,  $N=  2^n$, $\CC^N$  and   $\{e_a\mid a\in V\}$ as before .

Let $\beth$ be a {\em mubent set of functions}  $V\to \Z_4$:
a set of $|V|$   functions $V\to  \Z_4$ such that the difference of any two is a bent function
 $V\to  \Z_4$.   
This time our  definition of a bent function
 $B\!:\!V\to  \Z_4$  (compare~\cite[p.~403]{Me})  is that, if $0\ne u\in V$,  then
 $n(u,0) \!= \!n(u,2)$ and $n(u,1) \!= \!n(u,3)$, 
where $n(u,k) \!:= \!|\{ v\in V \!\mid \! B(v+u)-B(v) \!= \!k\}|$  for   $k\in \Z_4$.

If $a\in V, $  $B\in \beth$, let 
$e_{a,B}:=   
\displaystyle
\sum_{v\in V}(-1)^{a\cdot v}i^{B(v)}e_v $  (compare~\cite[(36)]{WoF}).

Let   ${\mathfrak{M}}_\infty:=\{e_a \mid a\in V\}$ and 
${\mathfrak{M}}_B:=\{\frac{1}{\sqrt N} e_{a,B}\mid  a \in V\}$ for $B\in \beth$.

\begin{theorem}
\label{char 2  bent} 
If $\beth$ is a  mubent set  of functions $V\to \Z_4$ then
$\{{\mathfrak{M}}_\infty \}\cup\{{\mathfrak{M}}_B\mid {B\in \beth}\}$ is a complete set of MUBs in $ \CC^N.$
 \end{theorem}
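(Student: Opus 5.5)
I will follow the proof of Theorem~\ref{main bent}, replacing $\z$ by $-1$ on the linear part and by $i$ on the function part. For $a,a'\in V$ and $B,B'\in\beth$ set $d:=a-a'$ and $\Delta:=B-B'$, viewed as a function $V\to\Z_4$. Since $(-1)^{a\cdot v}$ is real and $\overline{i^{B'(v)}}=i^{-B'(v)}$, we get
$$\alpha:=(e_{a,B},e_{a',B'})=\sum_{v\in V}(-1)^{d\cdot v}\, i^{\Delta(v)} .$$

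First I handle $B=B'$, so $\Delta=0$. Then $\alpha=\sum_v(-1)^{d\cdot v}$. This equals $N$ when $d=0$ and $0$ when $d\ne0$, because $v\mapsto d\cdot v$ is then a surjective homomorphism onto $\Z_2$. Hence each $\mathfrak{M}_B$ is an orthonormal basis. Also $|(e_a',\frac1{\sqrt N}e_{a,B})|=1/\sqrt N$, since every coefficient of $e_{a,B}$ has modulus $1$. This settles unbiasedness against $\mathfrak{M}_\infty$.

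Next take $B\ne B'$, so $\Delta$ is bent in the $\Z_4$ sense. Expand $\alpha\bar\alpha$ as a double sum over $v,v'$ and substitute $u=v'-v$. Since $V=\Z_2^n$, we have $d\cdot v'-d\cdot v=d\cdot u$, and the sign $(-1)^{d\cdot u}$ is unambiguous. This gives
$$|\alpha|^2=\sum_{u\in V}(-1)^{d\cdot u}\sum_{v\in V} i^{\Delta(v+u)-\Delta(v)} .$$
I need the exponent $\Delta(v+u)-\Delta(v)$ computed in $\Z_4$. This is legitimate because $i^{x}$ depends only on $x$ mod $4$, and conjugation negates the exponent mod $4$.

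The term $u=0$ contributes $N$. For $u\ne0$ the inner sum equals
$$n(u,0)+i\,n(u,1)-n(u,2)-i\,n(u,3).$$
The bent condition $n(u,0)=n(u,2)$ and $n(u,1)=n(u,3)$ makes this zero. Hence $|\alpha|^2=N$, and after normalizing, $|(\frac1{\sqrt N}e_{a,B},\frac1{\sqrt N}e_{a',B'})|^2=1/N$.

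Finally, $|\beth|=N$, so there are $N+1$ bases in total. The only delicate point, rather than a true obstacle, is bookkeeping of the mixed $\Z_2$/$\Z_4$ exponents. The $\Z_2$ part must be kept as a sign $(-1)^{a\cdot v}$ and not absorbed into $\Z_4$ values; that way the translation $u=v'-v$ in $V$ interacts correctly with the $\Z_4$-valued differences of $\Delta$. Written this way, the argument is a line-by-line transcription of the odd case.
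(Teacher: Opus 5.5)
Your proposal is correct and follows the paper's proof essentially line by line. It reduces to $\alpha=\sum_{v}(-1)^{d\cdot v}i^{\Delta(v)}$, substitutes $u=v'-v$ in $\alpha\bar\alpha$, and uses the bent conditions $n(u,0)=n(u,2)$, $n(u,1)=n(u,3)$ to kill every $u\ne0$ term; you also spell out the orthonormality of each $\mathfrak{M}_B$ and the unbiasedness against $\mathfrak{M}_\infty$, which the paper leaves implicit.
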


  \proof 
  For $a,a'\in V$ and $B,B'\in \beth$   let   
$$\alpha:= (e_{a,B}, e_{a',B'})=\displaystyle
\sum_{v\in V}
(-1)^{ a \cdot  v-a' \cdot  v}i^{  B(v)  - B'(v)} =
\sum_{v\in V}
(-1)^{ d \cdot   v }i^{    \Delta(v)}
$$
with  {$d:= a-a'$  and $ \Delta:=B-B'$}.  Here
   $(e_{aB}, e_{a'B})=0$ when $a\ne a'$ and
 $(e_{aB}, e_{aB})= N$.
 
%\medskip\medskip
    Let  $\Delta\ne 0$.  Use  $u=v'- v$ in  the following  calculation: 

\vspace{2pt}

$\begin{array}{llllllll}
\vspace{2pt}  
|(e_{a,B}, e_{a',B'})|^2
\,  =\,  \alpha\bar\alpha 
\hspace{-6pt}&= & \hspace{-10pt}\vspace{2pt}\displaystyle
 \sum_{v,v'\in V}
(-1)^{d \cdot v' - d \cdot v}i^{ \Delta(v') - \Delta(v) } 
\\  
\vspace{-2pt} \vspace{2pt}  \hspace{-6pt}&= & \hspace{-6pt}\displaystyle
 \sum_{u \in V}(-1)^{d\cdot u  }
 \sum_{v \in V}
i^{ \Delta(v+u)-\Delta(v)}  
\\  
\vspace{1pt}  \hspace{-6pt}&= & \hspace{-6pt}
\displaystyle \sum_{ u =0}(-1)^{d\cdot u}{ \sum_{v \in V}i^0}
+ \! \!\sum_{0\ne u \in V}(-1)^{d\cdot u  }\sum_{ k =0}^3 n(u,k)i^k   
\\   
\hspace{-6pt}&= & \hspace{-6pt}   |V| +0\,= \,N     
\end{array}
$
\vspace{2pt}

 \noindent since  $\Delta$ is bent    $V\to  \Z_4$,  so
 $n(u,0)i^0+ n(u,2)i^2=0$ and  $n(u,1)i^1+ n(u,3)i^3=0$ for $u\ne0$. 
 Then
 $ |(\frac{1}{\sqrt N}e_{a,B}, \frac{1}{\sqrt N}e_{a',B'})|^2 =\frac{1}{ { N^2}}N= \frac{1}{ N},$
 as required for MUBs. \qed
\rm
%\medskip\medskip 

\medskip

The remainder of this section presents known constructions  of complete sets of MUBs using \thmref{char 2  bent} together with quadratic bent functions $V\to \Z_4$  and spread sets \cite{CCKS,BBRV}.

 If $v\in V=\Z_2^n$ let $\hat v\in \Z_4^n$ have the same coordinates as  $v$ but with 0 and 1 viewed in $\Z_4$.   
 Similarly, any matrix $R$ over $\Z_2$ produces a matrix $\hat R$ over $\Z_4$.%
   
\lemma
 Any   symmetric $n\times n$ matrix $M$  over $\Z_4$ such  that {\rm$M$~(mod 2)} is nonsingular
 produces a quadratic bent function $B_M\!:\!V\to \Z_4$   via $B_M(v):=\hat v   M \hat v^T.$
 
 \rm
 \medskip
 
 \proof
 Let $B:=B_M$.  Claim: $B(  u+  v)\!=\!B(  u)+B(  v)+2 \hat u  M  \hat v^T$ for all 
 ${u,v\in \Z_2^n}$ (compare~\cite[(4.2)]{CCKS}, \cite[p.~403]{Me}).  
 \vspace{-2pt}
 If $a,b\in \Z_2$  then, trivially,
 $\widehat{a+b}=\hat a+\hat b+2\hat a\hat b$.   
 If $u=(a_i), v=(b_i)\in V$ let   $\hat u*\hat v  :=(\hat a_i\hat b _i)\in \Z_4^n$.   Then
 $\a:=\widehat {u+ v}M\widehat{u + v}^T$  is  
 $({\hat u+\hat v+{2\hat u*\hat v}})M(\hat u+\hat v+2\hat u*\hat v)^T =(\hat u+\hat v)M(\hat u+\hat v)^T+(\hat u+\hat v)M(2\hat u*\hat v)^T + {(2\hat u*\hat v)M(\hat u+\hat v)^T }.$
  Since  $M$ is symmetric, 
 $\hat u M \hat v^T = \hat v M \hat u^T $ and 
 $(\hat u+\hat v)M(2\hat u*\hat v)^T =(2\hat u*\hat v)M(\hat u+\hat v)^T.$  Then
 $\a=(\hat uM\hat  u^T+\hat vM\hat v^T+2\hat uM\hat v^T)+0$, proving our claim.

 Given $0\ne u\in V$ and  $k\in \Z_4$ we need the number of $  v\in V$ such  that $B(v+u)-B(v)=k$; 
 that is,  such  that
$2\hat u    M  \hat v^T=k-B(  u)  .$  There are 
 $n(u,k)=n(u,k+2)=0$  solutions if  $k-B(  u) $ is 1 or  3, so assume that
 $k-B(  u) =2s$ for $s=0$ or 1.   
 Let $M':=M$ (mod 2).
  If $v\in V$ then $2\hat u    M  \hat v^T=2s$ if and only if $ u    M' v^T=s$.
 Since    $M'$  is nonsingular, $uM'\ne0$ and 
  $v\mapsto (u   M' ) v^T$   takes every value in $\Z_2$ exactly $|V|/2$ times.
 Then   $n(u,k)=n(u,k+2)=|V|/2$, and $B\!:\!V\to \Z_4$ is bent.\qed

\proposition \label{bentness}
Every spread set $\Sigma$ of symmetric $n\times n$ matrices over $\Z_2$ produces a mubent set 
\vspace{1pt}
$\{B_{\hat R  },0 \mid R\in \Sigma\backslash\{0\}\}$
of  quadratic functions $\Z_2^n\to \Z_4$ and   a complete set of MUBs in $\CC^{2^n}\!$.
 
 \rm
 \medskip

\proof
Let $B_\bullet\in \Sigma$ and replace $\Sigma $ by the spread set $ \{B-B_\bullet\mid B\in \Sigma\}.$
Now $0\in \Sigma$,  and  $\Sigma\backslash\{0\}$  consists of 
nonsingular symmetric matrices.
If $R $ and $S $ are distinct  members of   $ \Sigma\backslash\{0\}$  then   
$\hat R-\hat S$ is a symmetric matrix that is  nonsingular (mod 2) since $\Sigma$ is a spread set.   
By the preceding lemma, $B_{\hat R -\hat S}$ is a bent function $\Z_2^n\to \Z_4$.
Since $B_{\hat R  }-B_{ \hat S}=B_{\hat R - \hat S} $ by definition, 
\vspace{2pt}
$\{B_{\hat R  },0\mid R\in \Sigma\backslash\{0\}\}$    is a mubent set 
of  quadratic functions, and \thmref{char 2  bent} applies.\qed

  \Example \rm 
  \label{even}
View $V$ as $\F_{2^n}$, with trace map  $Tr\!:\! V\to \Z_2$  and  nondegenerate symmetric
 bilinear form $Tr(xy)$ for $x,y\in V$.   Choose an orthonormal basis for $V$,  so  $Tr(xy)=x\cdot y$   for $x,y\in V$. 
 Then a self-adjoint  $\Z_2$-linear transformation $R$ on $V$
 can be viewed as a symmetric matrix, producing a matrix  $\hat R$ over $\Z_4$.

The  set  $\{cI\mid c\in V\}$ of self-adjoint transformations of  $V$ is a spread set.
\propref{bentness} produces an associated 
  mubent set    $\beth :=\{B_{\widehat { cI \hspace{.5pt}}},0 \mid   c \in V\backslash\{0\}\}$ and a complete set  of MUBs:
  $\{{\mathfrak{M}}_\infty \}\cup\{{\mathfrak{M}}_B\mid {B\in \beth}\}$.
    
\section {Remarks}\rm
Examples~\ref{odd} and \ref{even} are    the most familiar complete sets of MUBs \cite{WoF}.   Others are in \cite[Ex. 2.5, 2.7, 3.5, 3.7]{Ka}.

For $p>2$ all known   mubent sets of functions  $V\to \Z_p$    consist of quadratic functions, except for
sets  obtained using \cite{CM}  (see \cite[Ex.~3.7]{Ka}).

All  known    mubent sets of functions  $V\to \Z_4$    consist of   quadratic   functions.

It is not clear how to use 
Theorems~\ref{main bent} or \ref{char 2  bent} 
to deal with   the unitary equivalence of  complete sets of MUBs  in $\CC^N$.   
  When $\beth$ consists of quadratic functions the 
  groups and geometries
  approach in \cite{CCKS,Ka} uses Heisenberg groups  to
turn unitary equivalence   into a question  about   isomorphisms of associated finite affine planes.
There are many known pairwise inequivalent  complete sets of MUBs for any
$p $  and suitable   $N$.
When $p>2$ the number  of known complete sets  is $O(N)$, whereas when $p=2$  that number   
 is not bounded above by any polynomial in $N$.%

   \medskip\medskip 
   
   \noindent{\em Acknowledgement.} I am grateful to Bill Martin for invaluable  help   
   in the preparation of
     this note.

\end{document}